\documentclass[a4paper,reqno, 11pt]{amsart}
\usepackage{amsmath,amssymb,amsthm,enumerate}

\usepackage{paralist}
\usepackage{graphics}
\usepackage{epsfig}
\usepackage{amsfonts}
\usepackage{amssymb}
\usepackage{color}

\usepackage{mathtools}

\usepackage[numbers,sort&compress]{natbib}

\usepackage{mathrsfs}
\usepackage{lineno}

\usepackage[svgnames]{xcolor}

\usepackage{pifont}
\usepackage{mathtools}

\usepackage[misc]{ifsym}

\usepackage[margin=0.8in]{geometry}
\setlength{\textwidth}{16cm} \setlength{\oddsidemargin}{0cm}
\setlength{\evensidemargin}{0cm}
\setlength{\topmargin}{-1cm}
\addtolength{\textheight}{1cm}


\def\ifl{\iffalse }

\def\bc{\begin{center}} \def\ec{\end{center}}
\def\ba{\begin{array}} \def\ea{\end{array}}
\def\bea{\begin{eqnarray}} \def\eea{\end{eqnarray}}
\def\beaa{\begin{eqnarray*}} \def\eeaa{\end{eqnarray*}}




\newtheorem{thm}{Theorem}[section]

\newtheorem{lem}{Lemma}[section]
\theoremstyle{remark}
\newtheorem{rem}{Remark}[section]
\newtheorem*{rem*}{Remark}

\numberwithin{equation}{section}

\newcommand{\R}{\mathbb{R}}

\newcommand{\pa}{\partial}
\newcommand{\na}{\nabla}
\newcommand{\al}{\alpha}

\newcommand{\OK}{\operatorname{OK}}

\newcommand{\Flor}[1]{\lfloor{#1}\rfloor}

\title{Global wellposedness for 2D quasilinear
wave without Lorentz}

\author[X.Y. Cheng]{Xinyu Cheng}
\address{X.Y. Cheng, Department of Mathematics, University of British Columbia, Vancouver, BC V6T 1Z2, Canada}
\email{xycheng@math.ubc.ca}

\author[D. Li]{ Dong Li}
\address{D. Li, Department of Mathematics, the Hong Kong University of Science \& Technology, Clear Water Bay, Kowloon, Hong Kong}
\email{mpdongli@gmail.com}

\author[J. Xu]{Jiao Xu}	
\address{J. Xu, SUSTech International Center for Mathematics, Southern University of Science and Technology,
	Shenzhen, P.R. China}
\email{xuj7@sustech.edu.cn}

\author[D.B. Zha]{Dongbing Zha}
\address{D.B. Zha,  Department of Mathematics and Institute for Nonlinear Sciences, Donghua University, Shanghai, P.R. China}
\email{ZhaDongbing@dhu.edu.cn, ZhaDongbing@163.com}
\begin{document}

\begin{abstract}
We consider the two-dimensional quasilinear wave equations with standard null-form
type quadratic nonlinearities. We prove global wellposedness without using the
Lorentz boost vector fields.
\end{abstract}

\maketitle

\section{introduction}
Denote $\square = \partial_{tt} - \Delta$ as  the usual wave operator. We consider the Cauchy problem for the following two-dimensional quasilinear wave equation:
\begin{align}\label{eq:we2d}
 \begin{cases}
  \square u=g^{kij} \partial_k u \partial_{ij} u,\, \quad t>2, \quad x\in\R^2,\\
  u|_{t=2}=\varepsilon f_1,\ \  \pa_{t} u|_{t=1}=\varepsilon f_2.
 \end{cases}
\end{align}
Here and throughout this note we adopt the  Einstein summation convention with $\partial_0 =\partial_t$ and $\partial_l=\partial_{x_l}$ for $l=1,2$.  For simplicity we assume $g^{kij}$ are constant coefficients,
$g^{kij}=g^{kji}$ for any $i$, $j$,  and satisfy the standard null condition:
\begin{align} \label{null1}
g^{kij} \omega_k \omega_i \omega_j=0, \quad \text{for any null $\omega$,
i.e. $\omega=(-1,\cos\theta, \sin \theta)$, $\theta \in [0,2\pi]$.}
\end{align}
In \cite{Alinh01_1}  Alinhac showed that under the general null condition
\eqref{null1} the system \eqref{eq:we2d} has small data global wellposedness, and  the highest
norm of the solution grows  at most polynomially in time. Alinac's proof relies on the construction of an approximation solution, combined with a judiciously chosen time-dependent weighted energy estimate known since then as the ghost weight method. The energy estimate used therein involves a collection
of vector fields which are well adapted to the d'Alembertian operator. In particular, in order to
harness sufficient time-decay of the solution, the Lorentz boost vector fields were used
heavily in conjunction with the scaling operator.  Heuristically, the benefit of the Lorentz boost can be seen from the following identities
(below $\Omega_{i0}=t \partial_i + x_i \partial_t$ denotes the usual Lorentz boost):
\begin{align}
L_0  = t \partial_t + r \partial_r, \qquad \frac{x_i} r \Omega_{i0} = r \partial_t + t \partial_r.
\end{align}
Clearly away from the light cone (i.e. $r\le t/2$ or $r\ge 2t$), we have $\partial_t $ and $\partial_r
\approx \frac 1 t ( O(L_0)  +O(\Omega_{i0}))$ which readily leads to time-decay estimates. Whilst the Lorentz boost can produce
strong decay estimates, they are not suitable for general wave systems which are not Lorentz invariant.  Such systems include non-relativistic wave systems with multiple wave speeds 
(cf. \cite{SK96,ST01}), nonlinear wave equations
on non-flat space-time (cf. \cite{Yang16}) and exterior domains (cf. \cite{MetSog07}).
From this perspective it is of fundamental importance to remove the Lorentz boost operator
and develop a new strategy for the general non-Lorentz-invariant systems. 
In \cite{Hosh06}, Hoshiga considered a quasilinear system with multiple speeds of propagation,
and proved global wellposedness under some suitable null conditions. A notable novelty in
\cite{Hosh06} is an $L^{\infty}$-$L^{\infty}$ estimate which relies on the fundamental
solution of the wave equation. In \cite{Zha16} (see also  \cite{PZ16}), Zha considered 
\eqref{eq:we2d}--\eqref{null1} with the following additional symmetry condition:
\begin{align} \label{symm1}
g^{kij}= g^{ikj}=g^{jik}, \qquad\forall\, i,j,k.
\end{align}
For this case Zha proved the global wellposedness without using the Lorentz boost vector fields. 
Note that the condition \eqref{symm1} is a bit too restrictive. For example, it does not include
the standard nonlinearity $\partial ( |\partial_t u|^2 - |\nabla u|^2)$. In recent \cite{CLX},
the first three authors introduced a novel strong null form which includes several prototypical
strong null forms such as $\partial ( |\partial_t u|^2 - |\nabla u|^2)$ in the literature as special cases. Moreover   a new normal-form type
strategy was developed in \cite{CLX} 
to prove uniform boundedness of highest norm of the solution. Other related developments with
different strategies 
can be found in the papers \cite{Lei16, CLM18, Cai21, Dong21, HY20, Zh19, Ka17}.

The purpose of this note is to develop further the program initiated in 
\cite{Zha16, PZ16, CLX}, and obtain a full wellposedness result under the standard null
condition \eqref{null1} without employing the Lorentz boost vector fields. Thanks to the aforementioned developments, it is now possible to build a robust and
streamlined Lorentz-free framework for general quasilinear equations. Our main result reads as follows.
\begin{thm}\label{thm:main}
Consider \eqref{eq:we2d} with $g^{kij}$ satisfying the standard null condition
\eqref{null1}. Let $m\ge 5$ and assume $f_1 \in H^{m+1}(\mathbb R^2) $, $f_2\in H^{m} (\mathbb R^2)$ are compactly supported in the disk $\{|x| \le 1\}$.  There exists $\varepsilon_{0}>0$ depending on $g^{kij}$ and $\|f_1\|_{H^{m+1}}+\|f_2\|_{H^{m}}$ such that for all $0\le \varepsilon <\varepsilon_0$, the system \eqref{eq:we2d} has a unique global solution.
Furthermore, the highest norm of the solution is polynomially bounded in time, and the
 second highest norm of the solution remains uniformly bounded, namely
\begin{align}
\sup_{t\ge 2} E_{m-1} (u(t,\cdot))=
\sup_{t\ge 2} \sum_{|\alpha|\le m-1}  \| (\partial \Gamma^{\alpha} u)(t,\cdot) \|^2_{L_x^2(\mathbb R^2)}
< \infty.
\end{align}
Here $\Gamma=\{ \partial_t, \partial_{x_1}, \partial_{x_2}, \partial_{\theta}, t\partial_t +r\partial_r\}$ does not include the Lorentz boost (see  \eqref{def_Gamma1} for notation).
\end{thm}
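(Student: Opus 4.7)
The plan is a continuity/bootstrap argument based on a two-tier energy hierarchy adapted to the admissible family $\Gamma=\{\pa_t,\pa_{x_1},\pa_{x_2},\pa_\theta,L_0\}$, where $L_0=t\pa_t+r\pa_r$. On a maximal existence interval $[2,T^*)$ I would postulate the a priori bounds
\begin{align*}
E_m(u(t))^{1/2}\le C_0\varepsilon(1+t)^{\delta},\qquad E_{m-1}(u(t))^{1/2}\le C_1\varepsilon,
\end{align*}
with $\delta=\delta(\varepsilon)$ small, and then recover the same inequalities with $C_0/2$, $C_1/2$; the continuity method yields $T^*=\infty$. Finite propagation speed confines the support of $u(t,\cdot)$ to $\{|x|\le t-1\}$, which is used freely below.

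The basic energy control comes from Alinhac's ghost-weight method. Commuting \eqref{eq:we2d} with $\Gamma^\alpha$ for $|\alpha|\le m$ is clean, since each element of $\Gamma$ has an admissible commutator with $\square$ (only $L_0$ produces a multiple of $\square$ itself). This yields $\square(\Gamma^\alpha u)=F_\alpha$ with $F_\alpha$ a sum of quasilinear products of lower-order $\Gamma$-derivatives. Multiplying by $e^{-q(r-t)}\pa_t\Gamma^\alpha u$ with a decreasing profile $q$ produces, on top of the usual $E_m$, a spacetime bound of good-derivative type for $Tu:=(\pa_t+\pa_r)u$ and $r^{-1}\pa_\theta u$. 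The standard null condition \eqref{null1} enters precisely here: the nonlinearity $g^{kij}\pa_k u\,\pa_{ij}u$ can be decomposed so that every quadratic term carries at least one such good factor, whose spacetime $L^2$ norm is exactly what the ghost weight controls.

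The central obstacle is the sharp $t^{-1/2}$ pointwise decay of $\pa u$ in two dimensions without the Lorentz boost $\Omega_{i0}$. With $\Omega_{i0}$ one gets the full $t^{-1/2}\langle t-r\rangle^{-1/2}$ decay via Klainerman--Sobolev; the $\Gamma$-only version yields only the weaker $\langle t+r\rangle^{-1/2}$. I would compensate by two devices. First, an $L^\infty$--$L^\infty$ estimate in the spirit of Hoshiga \cite{Hosh06}, derived from the explicit 2D fundamental solution of $\square$: a weighted $L^\infty$ norm of a solution to $\square v=F$ with compactly supported data is controlled by a weighted $L^\infty$ norm of $F$, giving $t^{-1/2}$ pointwise decay of low-order $\Gamma^\alpha u$ directly from the bootstrapped energy. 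Second, inside the cone $r\le t/2$ I would exploit the identity $\pa_t=t^{-1}L_0-(x_i/t)\pa_i$ to extract an extra factor of $t^{-1}$ through admissible vector fields only, thereby bypassing the absent $\Omega_{i0}$. Combined with the good-derivative spacetime bounds, these provide enough decay to absorb the quadratic source and close the $E_m$ bootstrap up to a $(1+t)^{C\varepsilon}$ loss.

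The final step, which is the genuinely new content of the theorem, is to upgrade $E_{m-1}$ from polynomially growing to uniformly bounded. Following the normal-form strategy of \cite{CLX}, I would introduce a bilinear correction $\tilde u=u+Q(\pa u,\pa u)$ whose Fourier symbol is designed so that, modulo cubic and purely quasilinear remainders, $\tilde u$ satisfies a wave equation without quadratic interactions. A modified energy $\tilde{\cE}_{m-1}$ attached to $\tilde u$ then obeys $\tfrac{d}{dt}\tilde{\cE}_{m-1}\lec (1+t)^{-1-\mu}\tilde{\cE}_{m-1}$ for some $\mu>0$, and Gr\"onwall closes the bootstrap. The most delicate check will be that $Q$ and its commutators with $\Gamma^\alpha$ for $|\alpha|\le m-1$ preserve the good/bad decomposition of derivatives under the general null condition \eqref{null1}, since \cite{Zha16} handled only the strictly smaller symmetric class \eqref{symm1}; this is where the combination of the Hoshiga-type $L^\infty$ bounds and the ghost-weight good-derivative spacetime estimates must be matched with surgical precision.
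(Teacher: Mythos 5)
Your skeleton (two-tier bootstrap with $E_m\lesssim t^{\delta}$ and $E_{m-1}$ bounded, ghost-weight energy identity, null decomposition into good derivatives $T$) matches the paper, but the two steps where the actual work happens go in different directions, and the last one has a genuine gap.

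First, the decay mechanism. The paper does not use a Hoshiga-type $L^\infty$--$L^\infty$ estimate from the fundamental solution. Its replacement for the Lorentz boost is the pair of weighted elliptic-type estimates in Lemma \ref{lem2.3a}, of the form $|\langle r-t\rangle\,\pa^2\tilde u|\lesssim|\pa\Gamma^{\le1}\tilde u|+(r+t)|\square\tilde u|$, combined with the weighted Sobolev inequalities of Lemma \ref{lem2.5A} and an interpolation $\|\tilde v\|_\infty\lesssim\|\tilde v\|_2^{1/2}\|\Delta\tilde v\|_2^{1/2}$ localized to $|x|\le \tfrac23 t$. This is what produces the decisive interior decay $\|\pa^2\Gamma^{\le J-3}u\|_{L^\infty(|x|<t/2)}\lesssim t^{-3/2}E_J^{1/2}$ (estimate \eqref{2.30AA}), which the authors single out as the main novelty. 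Your device $\pa_t=t^{-1}L_0-(x_i/t)\pa_i$ gains only one factor of $t^{-1}$ on a first derivative; you would still need to explain how to reach $t^{-3/2}$ on \emph{second} derivatives inside the cone, since that rate is exactly what makes the quasilinear terms $I_1,I_3,I_5$ and the term $Y_1$ time-integrable. Your route is not obviously wrong, but as written it does not supply this key quantitative input.

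Second, and more seriously, the normal-form correction $\tilde u=u+Q(\pa u,\pa u)$ for the uniform bound on $E_{m-1}$ is not what the paper does and would not work under the general null condition \eqref{null1}. The normal-form strategy of \cite{CLX} requires a \emph{strong} null form precisely because the bilinear symbol of $Q$ is singular on the wave--wave resonant set (parallel frequencies), and the standard null condition provides only one order of vanishing there --- not enough to render $Q$ bounded. Removing that restriction is the stated purpose of this paper. The actual mechanism for $\sup_t E_{m-1}<\infty$ is a direct closure of the ghost-weight estimate at level $m_0=m-1$: after the null decomposition, every error term is either absorbable ($\OK$ in the sense of \eqref{No3.5}) or bounded by $t^{-3/2}E_{\Flor{m_0/2}+3}^{1/2}E_{m_0}^{1/2}$, and the single dangerous term $Y_1=\int g^{kij}\omega_i\omega_jT_kv\,\pa_{tt}u\,\pa_tv\,e^p$ is estimated by trading one derivative upward, $|Y_1|\lesssim t^{-3/2}E_3^{1/2}E_m^{1/2}E_{m-1}^{1/2}$, which stays integrable in time because $E_m$ grows only like $t^{\epsilon_4}$. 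You should replace the normal-form step by this descent argument (or justify why a bounded $Q$ exists for general \eqref{null1}, which I do not believe it does).
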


We now outline the key steps of the proof of Theorem \ref{thm:main} (see section 2
for the relevant notation). To elucidate the idea, we fix any multi-index $\alpha$ and denote $v= \Gamma^{\alpha} u$ (we suppress the dependence on $\alpha$ for simplicity of notation).   By Lemma \ref{Lem2.3}, we have
\begin{align} \label{1.6AA}
\square v = \sum_{\alpha_1+\alpha_2 \le \alpha}  g^{kij}_{\alpha_1,\alpha_2} 
\partial_k \Gamma^{\alpha_1}u \partial_{ij} \Gamma^{\alpha_2} u.
\end{align}
In the forthcoming energy estimates, we shall sum over $|\alpha|\le m_0$, where $m_0$
is a running parameter. If $m_0=m-1$, we seek to show the uniform-in-time boundedness
of $E_{m-1}(u(t,\cdot))$. If $m_0=m$, we show $t^{\tilde \epsilon}$ ($\tilde \epsilon$ is a small
exponent) growth of $E_m(u(t,\cdot))$. 

Step 1. Weighted energy estimates: LHS of \eqref{1.6AA}. We choose $p (r,t)=q(r-t)$ with $q^{\prime}(s)$ nearly
scales as $\langle s \rangle^{-1}$ to derive
\begin{align}
\int \square v \partial_t v e^p dx = 
\frac 12 \frac d {dt}
( \| e^{\frac p2} \partial v \|_2^2)
+ \frac 12 \int e^p q^{\prime} |Tv|^2 dx.
\end{align}
In more detail, we have
\begin{align}
&\sum_{|\alpha| \le m_0} \| e^{\frac p2} \partial v \|_2^2 \sim E_{m_0} 
=\sum_{|\alpha|\le m_0} \| \partial \Gamma^{\alpha} u(t,\cdot) \|_{2}^2; \\
&\sum_{|\alpha|\le m_0}  \int e^p q^{\prime} |Tv|^2 dx
= \sum_{|\alpha|\le m_0} 
\int e^p q^{\prime} |T \Gamma^{\alpha} u |^2 dx. 
\end{align}
When carrying out the energy estimates, we shall use the convention \eqref{No3.5}. 

Step 2. Refined decay estimates. To remedy the lack of Lorentz boost vector fields, 
one has to employ $L^{\infty}$ and $L^2$ estimates involving the weight-factor $\langle
r-t\rangle$. At the expense of certain smallness of $E_{\Flor{\frac m2}+3}$ and
using in an essential way the nonlinear null form
(see Lemma \ref{lem2.3a}), we obtain
\begin{align}
& \| \langle r -t\rangle (\partial^2 \Gamma^{\le l_0}
u)(t, \cdot)\|_2 \lesssim \| (\partial \Gamma^{\le l_0+1} u )(t,\cdot) \|_2,\qquad
\forall\, l_0\le m-1; \\
& |\langle r -t \rangle (\partial^2 \Gamma^{\le l_0}
u)(t,x)| \lesssim | (\partial \Gamma^{l_0+1} )(t,x)|, \qquad\forall\, r\ge t/10, \,
l_0\le m-1; \\
& \| (\partial  \Delta \Gamma^{\le m-3} u )(t, \cdot )\|_{L_x^2(|x|\le \frac 23 t)} \lesssim
t^{-2} \| (\partial \Gamma^{\le m-1} u)(t,\cdot) \|_2. 
\end{align} 
These in turn lead to a handful of strong decay estimates (see Lemma 
\ref{lem2.6}):
\begin{align}
&t^{\frac 12}
\| \partial \Gamma^{\le m-3} u\|_{\infty}
+t^{\frac 32} \| \frac {T\Gamma^{\le m-3} u} {\langle r -t\rangle
}\|_{\infty}
+t^{\frac 12} \| \frac {T \Gamma^{\le m-2} u } {\langle r -t\rangle}
\|_2 \lesssim E_{m-1}^{\frac 12} ;\\
& t^{\frac 12}
\| \langle r-t\rangle \partial^2 \Gamma^{\le m-4} u \|_{\infty}
+ t^{\frac 32}
\| T \partial \Gamma^{\le m-4} u \|_{\infty}
+ t \| T \partial \Gamma^{\le m-4} u \|_2 \lesssim E_{m-1}^{\frac 12}.
\end{align}
These decay estimates play an important role in the nonlinear energy estimates. 

Step 3. Weighted energy estimates: nonlinear terms. We discuss several cases.

Case 1: { $\alpha_1<\alpha$ and $\alpha_2<\alpha$}.
Since $g^{kij}_{\alpha_1,\alpha_2}$ still
satisfies the null condition, by Lemma \ref{Lem2.3} we rewrite
\begin{align}
  &\sum_{\substack{\alpha_1<\alpha,  \alpha_2 <\alpha \\ \alpha_1+\alpha_2\le \alpha}}
g^{kij}_{\alpha_1,\alpha_2} \partial_k \Gamma^{\alpha_1} u
\partial_{ij} \Gamma^{\alpha_2} u    \notag \\
= &\sum_{\substack{\alpha_1<\alpha,  \alpha_2 <\alpha \\ \alpha_1+\alpha_2\le \alpha}}
g^{kij}_{\alpha_1,\alpha_2}
(T_k \Gamma^{\alpha_1} u \partial_{ij} \Gamma^{\alpha_2} u
-\omega_k \partial_t \Gamma^{\alpha_1 } u T_i \partial_j \Gamma^{\alpha_2} u
+ \omega_k \omega_i \partial_t \Gamma^{\alpha_1} u T_j
\partial_t \Gamma^{\alpha_2} u ).
\end{align}

By using the decay estimates proved in Step 2, we show that
\begin{align} 
  &\| \sum_{|\alpha|\le m_0} \sum_{\substack{\alpha_1<\alpha,  \alpha_2 <\alpha \\ \alpha_1+\alpha_2\le \alpha}}
g^{kij}_{\alpha_1,\alpha_2} \partial_k \Gamma^{\alpha_1} u
\partial_{ij} \Gamma^{\alpha_2} u    \|_2 \lesssim  
t^{-\frac 32} E_{\Flor{\frac {m_0}2}+3}^{\frac12} E_{m_0}^{\frac 12}.
\end{align}

Case 2: The quasilinear piece $\alpha_1=0$, $\alpha_2=\alpha$. 
By using successive integration by parts, we have
\begin{align}
 \int g^{kij}\pa_{k}u\pa_{ij}v\pa_{t}v e^{p}
= \OK,
\end{align}
where $\OK$ is in the sense of \eqref{No3.5}. Here a crucial observation is the algebraic
identity (see \eqref{varphi1})
\begin{align}\notag
   &-\pa_{j}\varphi\pa_{i}v\pa_{t}v+\pa_{t}\varphi\pa_{i}v\pa_{j}v-\pa_{i}\varphi\pa_{t}v\pa_{j}v
   \notag\\
   =&-T_{j}\varphi\pa_{i}v\pa_{t}v+\pa_{t}\varphi T_{i}v T_{j} v-T_{i}\varphi\pa_{t}v\pa_{j}v-\omega_{i}\omega_{j}\pa_{t}\varphi(\pa_{t}v)^2,
 \end{align}
where we take $\varphi=\pa_{k}u$ or $\varphi=e^{p}$. The standard null form condition amounts
to the annihilation of the term $\omega_i \omega_j \omega_k$ when
$\varphi =\partial_k u$ and  $\partial_t \varphi$ is
replaced by $T_k\partial_t u -\omega_k \partial_{tt} u$.

Case 3: the main piece $\alpha_1=\alpha$, $\alpha_2=0$. By using Lemma \ref{Lem2.3}
with the decay estimates, we derive
\begin{equation}
  \int g^{kij} \pa_{k}v\pa_{ij}u\pa_{t}v e^{p}= \OK+
  \underbrace{\int g^{kij} \omega_{i}\omega_{j}T_{k}v\pa_{tt}u\pa_{t}v e^{p}}_{=:Y_1}.
\end{equation}
We then discuss two sub-cases. If $m_0=m-1$, we show
\begin{align}
|Y_1| \lesssim t^{-\frac 32} E_3^{\frac 12} E_m^{\frac 12} E_{m_0}^{\frac 12}.
\end{align}
If $m_0=m$, we use Cauchy-Schwartz to bound $Y_1$ as
\begin{align}
|Y_1| &\le \OK + \mathrm{const} \cdot \int \frac 1 {q^{\prime}}|\partial_{tt} u |^2
|\partial_t v|^2 dx \le  \OK+ \frac 1 t E_3 E_m.
\end{align}

Collecting all the estimates and assuming smallness of the initial data, we finally obtain
\begin{align}
\sup_{t\ge 2} E_{m-1} (u(t,\cdot) ) \le \epsilon_3 \ll 1, \qquad
\sup_{t\ge 2} \frac {E_m(u(t,\cdot) )} {t^{\epsilon_4} } \le 1,
\end{align}
where $\epsilon_3>0$, $\epsilon_4>0$ are small constants. This concludes the
proof of Theorem \ref{thm:main}.
\begin{rem}
At this point, it is worthwhile pin-pointing exactly where the symmetry condition $g^{kij}=g^{ikj}=g^{jik}$
for all $k$, $i$, $j$ was used in \cite{PZ16}. In our notation, this comes from bounding the quasilinear piece $\alpha_2=\alpha$. Namely
\begin{align}
 \int g^{kij} \partial_k u \partial_{ij} v \partial_t v e^p dx  &=\int g^{kij} \partial_j ( \partial_k u \partial_i v \partial_t v e^p)
-\int g^{kij} \partial_{kj} u \partial_i v \partial_t v e^p \notag \\
& \qquad -\int g^{kij} \partial_k u \partial_i v \partial_{tj} v e^p
-\int g^{kij} \partial_k u \partial_i v\partial_t v \partial_j (e^p).
\end{align}
By using the symmetry $g^{kij}=g^{kji}$ which is harmless, we have
\begin{align}
- \int g^{kij} \partial_k u \partial_i v \partial_{tj} v e^p
= -\frac 12 \int g^{kij} \partial_t (\partial_ku \partial_i v \partial_j v e^p)
+\frac 12 \int g^{kij} \partial_{tk} u \partial_i v \partial_j v e^p
+ \frac 12 \int g^{kij} \partial_i v \partial_j v \partial_t (e^p).
\end{align}
It follows that
\begin{align}
 \int g^{kij} \partial_k u \partial_{ij} v \partial_t v e^p dx  &=
 -\int g^{kij} \partial_{kj} u \partial_i v \partial_t v e^p
 + \frac 12 \int g^{kij} \partial_{tk} u \partial_i v \partial_j v e^p
 + \frac 12 \int g^{kij} \partial_k u
 \partial_i v \partial_j v \partial_t (e^p)+ \cdots,  \label{intro_rem6a}
 \end{align}
 where $\cdots$ denotes harmless terms.
The second term on the RHS of \eqref{intro_rem6a} is not a problem
thanks to the good decay of $\partial_{tk} u$. On the other hand, in \cite{PZ16} the time-decay
of $\partial^2 u$ in the regime $r\le t/2$ was not sufficient to treat the first
term on the RHS of \eqref{intro_rem6a}. For this reason (see (3.11) in \cite{PZ16}),
Peng and Zha made use of the other piece corresponding to $\alpha_1=\alpha$
and the symmetry $g^{kij}=g^{ikj}$ to eliminate the above term, i.e.:
\begin{align}
\int g^{kij} \partial_k v \partial_{ij} u \partial_t v e^p
= \int g^{kij} \partial_i v \partial_{kj} u \partial_t v e^p.
\end{align}
One of the main novelty of this work is that we obtained $t^{-\frac 32} $ decay of 
$\partial^2 u$
in the regime $r\le t/2$. This and several other new estimates   can have useful applications
in many other problems.
\end{rem}

The rest of this note is organized as follows. In Section 2 we collect some preliminaries and useful
lemmas. In Section 3 we give
 the proof of Theorem \ref{thm:main}.

\subsection*{Acknowledgement.}
D. Li is supported in part by Hong Kong RGC grant GRF 16307317 and 16309518.
D. Zha is supported in part by
National Natural Science Foundation of China No. 11801068 and the Fundamental Research Funds for the Central Universities.
\section{Preliminaries}

\subsection*{Notation}
We shall us the Japanese bracket notation: $ \langle x \rangle = \sqrt{1+|x|^2}$, for $x \in \mathbb R^d$.  We denote  $\partial_0 = \partial_t$,
$\partial_i = \partial_{x_i}$, $i=1,2$ and (below $\partial_{\theta}$ and
$\partial_r$ correspond to the usual polar coordinates)
\begin{align}
& \partial = (\partial_i)_{i=0}^2, \; \partial_{\theta}=x_1 \partial_2 -x_2\partial_1,
\; L_0 = t \partial_t + r \partial_r, \\
& \Gamma= (\Gamma_i )_{i=1}^5, \quad\text{where } \Gamma_1 =\partial_t, \Gamma_2=\partial_1,
\Gamma_3= \partial_2, \Gamma_4= \partial_{\theta}, \Gamma_5=L_0;
\label{def_Gamma}\\
& \Gamma^{\alpha} =\Gamma_1^{\alpha_1} \Gamma_2^{\alpha_2}
\Gamma_3^{\alpha_3} \Gamma_4^{\alpha_4}\Gamma_5^{\alpha_5}, \qquad \text{$\alpha=(\alpha_1,\cdots, \alpha_5)$ is a multi-index}; \label{def_Gamma1}\\
& \partial_+ =\partial_t + \partial_r, \qquad \partial_- =\partial_t - \partial_r; \\
& T_i = \omega_i \partial_t + \partial_i, \; \omega_0=-1, \; \omega_i=x_i/r, \, i=1,2.
\label{2.5a}
\end{align}
Note that in \eqref{def_Gamma} we do not include the Lorentz boosts. Note that $T_0=0$.
For simplicity of notation,
we define for any integer $k\ge 1$,  $\Gamma^k = (\Gamma^{\alpha})_{|\alpha|=k}$,
$\Gamma^{\le k} =(\Gamma^{\alpha})_{|\alpha|\le k}$.
In particular
\begin{align}
|\Gamma^{\le k} u | = (\sum_{|\alpha|\le k} |\Gamma^{\alpha} u |^2)^{\frac 12}.
\end{align}
Informally speaking, it is useful to think of  $\Gamma^{\le k} $ as any one of the vector
fields $ \Gamma^{\alpha}$ with $|\alpha| \le k$. For integer $J\ge 3$, we shall denote 
\begin{align}
E_J = E_J(u(t,\cdot)) = \| (\partial \Gamma^{\le J} u)(t,\cdot) \|_{L_x^2(\mathbb R^2)}^2.
\end{align}

For any two quantities $A$, $B\ge 0$, we write  $A\lesssim B$ if $A\le CB$ for some unimportant constant $C>0$.
We write $A\sim B$ if $A\lesssim B$ and $B\lesssim A$. We write $A\ll B$ if
$A\le c B$ and $c>0$ is a sufficiently small constant. The needed smallness is clear from the context.

 \begin{lem}[Sobolev and Hardy]\label{lem:S}
For $v\in C_c^{\infty}(\R^2)$, we have
\begin{align}
|v(x)| \lesssim \langle x \rangle^{-\frac 12} ( \| \partial_r^{\le 1} \partial_{\theta}^{\le 1}
 v\|_2 + \| \Delta v \|_2).
 \end{align}
 Suppose $u=u(t,x)$ ($t\ge 0$) is smooth and compactly supported in $ \{(t,x):\, |x|\le 1+t\}$, then
 \begin{align}
 \| \langle |x|-t\rangle^{-1} u \|_{L_x^2(\mathbb R^2)}
 \lesssim \| \partial_r u \|_{L_x^2(\mathbb R^2)},
 \qquad \langle |x| -t\rangle^{-1} |u(t,x)| \lesssim 
 \langle x \rangle^{-\frac 12} \| \partial \Gamma^{\le 1} u \|_{L_x^2(\mathbb R^2)}.
 \end{align}
\end{lem}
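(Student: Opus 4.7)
My plan is to prove the three inequalities in order, deducing the third from the first two.

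For the first estimate I would work in polar coordinates and split at $|x|=1$. On $|x|\ge 1$, starting from the identity
\[
\partial_s\Bigl(s\int_{S^1}|v(s,\theta)|^2\,d\theta\Bigr) = \int_{S^1}|v|^2\,d\theta + 2s\int_{S^1}v\,\partial_s v\,d\theta
\]
and integrating from $r$ to $\infty$ (using the compact support of $v$), Cauchy--Schwarz yields $r\,\|v(r,\cdot)\|_{L^2(S^1)}^2\lesssim \|v\|_{L^2}\,\|\partial_r v\|_{L^2}$. The 1D Sobolev embedding $H^1(S^1)\hookrightarrow L^\infty(S^1)$ applied to $v(r,\cdot)$ upgrades this to $r\sup_\theta|v(r,\theta)|^2\lesssim \|\partial_r^{\le 1}\partial_\theta^{\le 1}v\|_{L^2}^2$, which is the entry point for the $\partial_\theta$ derivatives on the right-hand side. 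On $|x|\le 1$ the weight $\langle x\rangle^{-1/2}$ is harmless, and the standard 2D embedding $H^2(\mathbb R^2)\hookrightarrow L^\infty$ combined with $\|\partial^2 v\|_{L^2}\lesssim \|v\|_{L^2}+\|\Delta v\|_{L^2}$ finishes the job; this is precisely why the $\|\Delta v\|_{L^2}$ term must appear on the right.

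For the Hardy-type inequality my plan is to use a divergence-theorem device. Set $G(r):=\int_0^r s\langle s-t\rangle^{-2}\,ds$; a direct computation shows that the radial vector field $\vec X(r)=(G(r)/r)\,\hat r$ has 2D divergence equal to $\langle r-t\rangle^{-2}$. Since $G(r)/r\to 0$ as $r\to 0$ and $u$ vanishes for $|x|>1+t$, the boundary terms in the divergence theorem drop, giving
\[
\int\frac{|u|^2}{\langle |x|-t\rangle^2}\,dx = -2\int \frac{G(r)}{r}\,u\,\partial_r u\,dx.
\]
A weighted Cauchy--Schwarz followed by absorption of a fraction of the left-hand side reduces the inequality to the pointwise bound $G(r)\langle r-t\rangle\lesssim r$ on $[0,1+t]$. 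I would verify this using the explicit antiderivative $G(r)=\tfrac12\ln(\langle r-t\rangle^2/\langle t\rangle^2)+t[\arctan(r-t)+\arctan t]$, splitting into $r\le t/2$ (where $G\lesssim r^2/t^2$), $r\in[t/2,t-1]$ (where $G\sim t/(t-r)$ matches $r/\langle r-t\rangle$), and $r\in[t-1,1+t]$ (where $G\sim t$ and $\langle r-t\rangle\sim 1$). This uniform pointwise bound, which is sharp throughout the regime $r\sim t$, is the step I expect to require the most care.

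For the third inequality I would apply the polar-coordinate argument of step 1 to the auxiliary function $w=\langle |x|-t\rangle^{-1}u$ on the annular region $|x|\ge 1$, where $\langle |x|-t\rangle^{-1}$ is smooth. The derivatives $\partial_\theta w$, $\partial_r w$, $\partial_r\partial_\theta w$ each produce a harmless term of the form $\langle r-t\rangle^{-1}\partial^{\le 1} u$ plus a weight-derivative term of the form $\langle r-t\rangle^{-2}u$; the former is bounded in $L^2$ by $\|\partial\Gamma^{\le 1}u\|_{L^2}$ and the latter by the Hardy inequality of step 2. On $|x|\le 1$ one has $\langle r-t\rangle\sim t$ and $\langle x\rangle^{-1/2}\sim 1$, so the inequality reduces to $t^{-1}|u(t,x)|\lesssim \|\partial\Gamma^{\le 1}u\|_{L^2}$, which follows from the 2D Sobolev embedding $H^2\hookrightarrow L^\infty$ applied to $u$ after converting the resulting $t^{-1}\|u\|_{L^2}$ contribution into a derivative norm by a final application of Hardy.
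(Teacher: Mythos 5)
Your proposal is correct; note that the paper does not prove this lemma itself but simply cites Lemmas 2.1--2.2 of \cite{CLX}, and your argument is the standard self-contained route those references take: the exterior radial Sobolev estimate on circles combined with $H^2(\R^2)\hookrightarrow L^\infty$ near the origin, and a weighted radial integration by parts for the Hardy bound (your divergence-theorem device with $\div\bigl((G(r)/r)\hat r\bigr)=\langle r-t\rangle^{-2}$ is the usual one-dimensional argument repackaged). The key pointwise bound $G(r)\langle r-t\rangle\lesssim r$ on $[0,1+t]$ does hold uniformly in $t$, e.g.\ via $G(r)\le t\int_{t-r}^{t}(1+\sigma^2)^{-1}\,d\sigma\lesssim t\,\langle t-r\rangle^{-1}$ for $t/2\le r\le t$ and $G(r)\lesssim r^2/t^2$ for $r\le t/2$, so the delicate step you flag goes through.
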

\begin{proof}
See Lemma 2.1 and 2.2 of \cite{CLX}.
\end{proof}

\begin{lem} \label{Lem2.3}
If $g^{kij}$ satisfies the null condition, then for $t> 0$  we have
\begin{align} \label{2.10A}
g^{kij} \partial_k f
\partial_{ij} h   = g^{kij}
(T_k  f \partial_{ij} h
-\omega_k \partial_t f T_i \partial_j h
+ \omega_k \omega_i \partial_t f T_j
\partial_t h),
\end{align}
where $T=(T_1,T_2)$ is defined in \eqref{2.5a}.  It follows that
\begin{align}
|g^{kij} \partial_k f \partial_{ij} h|
& \lesssim | T f | |\partial^2 h| + |\partial f | | T \partial h|  \label{a2.12a}\\
& \lesssim
\frac 1 {\langle r +t \rangle}
(|\Gamma f| |\partial^2 h| + |\partial f | |\Gamma \partial h|
+ |\partial f | \cdot |\partial^2 h| \cdot |r -t | ). \label{a2.12b}
\end{align}
Suppose $g^{kij}$ satisfies the null condition and
$
\square u = g^{kij} \partial_k u \partial_{ij} u.
$
Then for any multi-index $\alpha$, we have
\begin{align} \label{a2.12aa}
\square \Gamma^{\alpha} u  = \sum_{\alpha_1+ \alpha_2 \le \alpha}
g^{kij}_{\alpha_1,\alpha_2} \partial_k \Gamma^{\alpha_1} u
\partial_{ij} \Gamma^{\alpha_2} u,
\end{align}
where for each ($\alpha_1$, $\alpha_2$), $g^{kij}_{\alpha_1,\alpha_2}$ also satisfies the
null condition.  In addition, we have $g^{kij}_{\alpha,0} =g^{kij}_{0,\alpha}=g^{kij}$.
\end{lem}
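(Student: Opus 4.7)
The plan is to establish the algebraic identity \eqref{2.10A} first (which yields \eqref{a2.12a} immediately), then upgrade to \eqref{a2.12b} via a region-splitting argument, and finally obtain \eqref{a2.12aa} by induction on $|\alpha|$. For \eqref{2.10A}, I would substitute $\partial_k = T_k - \omega_k \partial_t$ into the left-hand side and, in the term carrying $\omega_k \partial_t f$, expand $\partial_{ij} h$ by iterating the same trick: $\partial_{ij} h = T_i \partial_j h - \omega_i T_j \partial_t h + \omega_i \omega_j \partial_{tt} h$. Substituting back and comparing with the claimed right-hand side, the discrepancy collapses to the single monomial $-g^{kij}\omega_k \omega_i \omega_j\, \partial_t f\, \partial_{tt} h$, which vanishes by the null condition \eqref{null1}. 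The pointwise bound \eqref{a2.12a} is then immediate via $|\omega_k| \le 1$.

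For \eqref{a2.12b}, the key step is the pointwise decay estimate
\[
|T_i f| \lesssim \frac{1}{\langle r+t\rangle}\bigl(|\Gamma f| + |r-t|\,|\partial f|\bigr), \qquad t \ge 2.
\]
I would prove this via the decomposition $T_i = \omega_i \partial_+ + (\tau_i/r)\partial_\theta$ (with $\tau_1 = -\omega_2,\ \tau_2 = \omega_1$) combined with the algebraic identity $(t+r)\partial_+ = 2 L_0 + (r-t)\partial_-$. The argument splits by region: when $r \ge t/2$, $(t+r) \sim r$, so both $|\tau_i/r|$ and the coefficient of $L_0$ absorb into $1/\langle r+t\rangle$; when $r < t/2$ one uses the crude estimate $|\tau_i \partial_\theta f / r| \le |\partial f|$ (from $|\partial_\theta f| \le r|\partial f|$) together with the fact that $|r-t|/(t+r) \gtrsim 1$ in this regime. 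Applying this estimate once to $|Tf|\,|\partial^2 h|$ and once (with $\partial h$ in place of $f$) to $|\partial f|\,|T\partial h|$ produces \eqref{a2.12b}.

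The recursive formula \eqref{a2.12aa} is proved by induction on $|\alpha|$, using the commutators $[\square, \partial_\mu] = 0$ for $\mu = 0, 1, 2$, $[\square, \partial_\theta] = 0$, and $[\square, L_0] = 2\square$, together with $[L_0, \partial_k] = -\partial_k$ and $[\partial_\theta, \partial_k] = R_k^\ell \partial_\ell$ (and their analogues on $\partial_{ij}$). Peeling off one $\Gamma$ from $\Gamma^\alpha$ and applying Leibniz to the inductive hypothesis converts every resulting term back to the standard form $\partial_k \Gamma^{\alpha_1} u \cdot \partial_{ij} \Gamma^{\alpha_2} u$. The \textbf{main obstacle} is to verify that each new coefficient tensor still satisfies the null condition. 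For commutations with $L_0$ or the translations the tensor $g^{kij}$ is unaltered; for commutations with $\partial_\theta$ one produces a linear combination of $g$ and its infinitesimal spatial rotations, and the null condition is preserved because the family $\omega = (-1, \cos\theta, \sin\theta)$ is invariant under spatial rotations (so the identity $g^{kij}\omega_k\omega_i\omega_j \equiv 0$ persists under differentiation in the rotation parameter). The identification $g^{kij}_{\alpha, 0} = g^{kij}_{0, \alpha} = g^{kij}$ follows by tracking, through the induction, the unique top-order contribution in which every $\Gamma$ lands on a single factor: successive commutations add only lower-order corrections, so the leading coefficient persists unchanged.
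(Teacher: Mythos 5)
Your proposal is correct. Note that the paper itself offers no argument for this lemma — it simply defers to Lemma 2.3 of \cite{CLX} — so there is nothing in-text to compare against; what you have written is the standard derivation and it checks out: substituting $\partial_k = T_k - \omega_k\partial_t$ and iterating on $\partial_{ij}h$ leaves exactly the residual $-g^{kij}\omega_k\omega_i\omega_j\,\partial_t f\,\partial_{tt}h$, which the null condition kills since $(-1,\omega_1,\omega_2)$ is a null vector; the identity $(t+r)\partial_+ = 2L_0 + (r-t)\partial_-$ together with $|\partial_\theta f|\le r|\partial f|$ and $|r-t|\gtrsim r+t$ for $r<t/2$ gives the weighted bound on $T$ without any Lorentz boost; and the commutator facts $[\square,\partial_\mu]=[\square,\partial_\theta]=0$, $[\square,L_0]=2\square$, $[L_0,\partial_k]=-\partial_k$, plus rotation-invariance of the null cone, yield \eqref{a2.12aa} with null coefficients and $g^{kij}_{\alpha,0}=g^{kij}_{0,\alpha}=g^{kij}$. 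The only cosmetic caveat is that your intermediate estimate is stated for $t\ge 2$ while \eqref{a2.12b} is claimed for all $t>0$; this is harmless since for $\langle r+t\rangle\sim 1$ the bound is trivial from $|\Gamma f|\ge|\partial f|$.
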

\begin{proof}
See Lemma 2.3 of \cite{CLX}.
\end{proof}

\begin{lem} \label{lem2.3a}
Suppose $\tilde u= \tilde u(t,x)$ has continuous second order derivatives. Then
\begin{align}
&| \langle r -t \rangle \partial_{tt} \tilde u (t,x) |
+| \langle r -t \rangle \partial_t \nabla  \tilde u (t,x) |
+| \langle r -t \rangle \Delta \tilde u (t,x) |  \notag \\
\lesssim &
| (\partial \Gamma^{\le 1} \tilde u)(t,x)| + (r+t) | (\square \tilde u)(t,x) |, \quad r=|x|, \,
t\ge 0;  \label{2.9a0}
\end{align}
and
\begin{align}
&| \langle r -t \rangle  \partial^2 \tilde u (t,x) |
\lesssim
| (\partial \Gamma^{\le 1} \tilde u)(t,x)| + (r+t) | (\square \tilde u)(t,x) |, \quad
\forall\, r\ge t/10,
 \, t\ge 1. \label{2.9a1}
\end{align}
Suppose $T_0\ge 1$ and $u \in C^{\infty}([1,T_0]\times \mathbb R^2)$ solves  \eqref{eq:we2d} with support in $|x|\le t+1$, $1\le t\le T_0$.
For any integer $l_0\ge 2$, there exists $\epsilon_1>0$ depending only on
$l_0$, such that if at some $1\le t\le T_0$,
\begin{align} \label{2.9a3}
\| (\partial \Gamma^{\le \lceil{\frac {l_0}2}\rceil +2} u)(t,\cdot) \|_{L_x^2(\mathbb R^2)} \le \epsilon_1,
\qquad  \text{( here $\lceil{z}\rceil = \min \{n\in \mathbb N:\, n\ge z\}$ )}
\end{align}
then for the same $t$, we have the $L^2$ estimate:
\begin{align} \label{2.9a4}
\| (\langle r -t \rangle \partial^2 \Gamma^{\le l_0} u) (t,\cdot) \|_{L_x^2(\mathbb R^2)}\lesssim \| (\partial \Gamma^{\le l_0+1} u )(t,\cdot) \|_{L_x^2(\mathbb R^2)}.
\end{align}
For any integer $l_1\ge 2$, there exists $\epsilon_2>0$ depending only on
$l_1$, such that if at some $1\le t\le T_0$,
\begin{align} \label{2.99a1}
\| (\partial \Gamma^{\le l_1 +1} u)(t,\cdot) \|_{L_x^2(\mathbb R^2)} \le \epsilon_2,
\end{align}
then for the same $t$, we have the point-wise estimate:
\begin{align} \label{2.99a2}
 |(\langle r -t  \rangle \partial^2 \Gamma^{\le l_1} u )(t,x) |
 \lesssim |  (\partial \Gamma^{\le l_1+1} u )(t,x)|,
 \qquad\forall\, r\ge t/10.
 \end{align}
 Moreover, we have
 \begin{align} \label{2.99aB2}
 \|  \partial \Delta \Gamma^{\le l_1-1} u \|_{L_x^2(|x|\le \frac 23 t)} 
 \lesssim t^{-2}\| (\partial \Gamma^{\le l_1 +1} u)(t,\cdot) \|_{L_x^2(\mathbb R^2)}.
 \end{align}
 \end{lem}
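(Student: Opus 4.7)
The strategy is to first establish the pointwise estimates \eqref{2.9a0} and \eqref{2.9a1} via commutator algebra in the vector fields $L_0, \partial_\pm, \partial_\theta$, and then leverage them together with the wave equation and Lemma \ref{Lem2.3} to prove the solution-dependent estimates \eqref{2.9a4}, \eqref{2.99a2}, and \eqref{2.99aB2}. The key algebraic identities are $(t \mp r)\partial_t = L_0 \mp r\partial_\pm$ (coming from $L_0 = \tfrac{t+r}{2}\partial_+ + \tfrac{t-r}{2}\partial_-$), the polar expression $\partial_+\partial_- = \square + r^{-1}\partial_r + r^{-2}\partial_\theta^2$, the commutators $[\partial, L_0] = \partial$ for $\partial \in \{\partial_t, \partial_r, \partial_\pm\}$, and the expansion $\partial_\theta^2 \tilde u = x_1 \partial_\theta\partial_2 \tilde u - x_2 \partial_\theta\partial_1 \tilde u - x_i\partial_i\tilde u$, which yields $|\partial_\theta^2 \tilde u|/(r+t) \lesssim |\partial\Gamma^{\le 1}\tilde u|$.

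To prove \eqref{2.9a0} for $\partial_{tt}\tilde u$, I apply $(t-r)\partial_t = L_0 - r\partial_+$ to $\partial_t\tilde u$, multiply by $(t+r)$, substitute $(t+r)\partial_+ = 2L_0 - (t-r)\partial_-$, and invoke the polar form of $\square$ to obtain
\[
(t^2-r^2)\partial_{tt}\tilde u = (t-r)L_0\partial_t\tilde u + rL_0\partial_-\tilde u - r^2\square\tilde u - r\partial_r\tilde u - \partial_\theta^2\tilde u.
\]
Dividing by $(t+r)$ and using $|t-r|/(t+r), r/(t+r) \le 1$, $r^2/(t+r) \le r+t$, plus the angular bound, yields the $\partial_{tt}$ piece; the $\Delta$ piece then follows from $\Delta = \partial_{tt} - \square$, and the $\partial_t\nabla$ piece by an analogous manipulation after decomposing $\partial_i = \omega_i\partial_r + \tilde\partial_i$ (the tangential part $\tilde\partial_i$ being controlled by $r^{-1}\partial_\theta$). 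For \eqref{2.9a1}, on $\{r\ge t/10\}$ we have $r+t\sim r$ and every spatial second derivative decomposes (in polar coordinates) into $\partial_r^2, r^{-1}\partial_r\partial_\theta, r^{-2}\partial_\theta^2$ with bounded coefficients; since $\partial_r^2 = \Delta - r^{-1}\partial_r - r^{-2}\partial_\theta^2$, these are all controlled by \eqref{2.9a0} and the angular bound.

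For \eqref{2.9a4}, I apply \eqref{2.9a0} to $v = \Gamma^\alpha u$ with $|\alpha|\le l_0$, take $L^2_x(\R^2)$ norms, and sum. The three quantities $\partial_{tt},\partial_t\nabla,\Delta$ determine $\|\langle r-t\rangle \partial^2 \Gamma^{\le l_0} u\|_2$: on $\{|x|\ge t/10\}$ invoke \eqref{2.9a1} pointwise, and on $\{|x|\le t/10\}$ use $\langle r-t\rangle \sim t$ together with the Plancherel-based elliptic bound $\|\partial_i\partial_j v\|_{L^2(\R^2)} \lesssim \|\Delta v\|_{L^2(\R^2)}$. The nonlinear remainder $\sum\|(r+t)\square\Gamma^\alpha u\|_2$ is treated by substituting Lemma \ref{Lem2.3} and the null-form bound \eqref{a2.12b}; the critical feature is that the factor $|r-t|$ in \eqref{a2.12b} pairs with $\partial^2\Gamma^{\alpha_2}u$ to form $\langle r-t\rangle \partial^2\Gamma^{\alpha_2}u$, matching the left-hand side exactly. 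Distributing derivatives so that the lower-order factor carries at most $\lceil l_0/2\rceil+2$ derivatives (hence sits in $L^\infty$ via Lemma \ref{lem:S}), the hypothesis \eqref{2.9a3} provides a multiplicative $\epsilon_1$, which for $\epsilon_1$ small is absorbed into the left-hand side. The same scheme proves \eqref{2.99a2} via \eqref{2.9a1} and pointwise smallness from Lemma \ref{lem:S} under \eqref{2.99a1}. For \eqref{2.99aB2}, I iterate \eqref{2.9a0} on $w = \partial\Gamma^\alpha u$ to extract two powers of $\langle r-t\rangle$ from $\partial\Delta\Gamma^{\le l_1-1}u$; on $\{|x|\le 2t/3\}$ we have $\langle r-t\rangle \sim t$, so dividing by $t^2$ and taking $L^2_x$ norms yields the $t^{-2}$ factor, with the nonlinear remainder absorbed by smallness.

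The main obstacle is the bootstrap in \eqref{2.9a4}: one must orchestrate the derivative distribution so that the smallness $\epsilon_1$ precisely multiplies the quantity $\|\langle r-t\rangle \partial^2 \Gamma^{\le l_0}u\|_2$ being estimated, thereby enabling absorption. This works because the null-form estimate \eqref{a2.12b} produces a factor $|r-t|$ that pairs with $\partial^2 h$ into $\langle r-t\rangle \partial^2 h$, matching the weighted left-hand side; the threshold $\lceil l_0/2\rceil + 2$ in \eqref{2.9a3} is precisely what Lemma \ref{lem:S} requires for controlling the low-order factor in $L^\infty$.
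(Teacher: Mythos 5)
Your overall architecture --- prove the pointwise bounds \eqref{2.9a0}--\eqref{2.9a1} by commutator algebra in $L_0,\partial_\pm,\partial_\theta$, then bootstrap the weighted $L^2$ and pointwise estimates using the null-form decomposition of Lemma \ref{Lem2.3} and the smallness hypotheses --- is the same as the one the paper relies on (the paper cites Lemma 2.4 of \cite{CLX} for everything except \eqref{2.99aB2} and only writes out that last estimate). However, two steps of your sketch paper over points where the naive version fails.

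First, in \eqref{2.9a4} on the interior region $\{|x|\le t/10\}$ you propose to combine $\langle r-t\rangle\sim t$ with the global bound $\|\partial_i\partial_j v\|_{L^2(\mathbb R^2)}\lesssim\|\Delta v\|_{L^2(\mathbb R^2)}$. As written this yields $t\|\Delta v\|_{L^2(\mathbb R^2)}$, and the full-plane quantity $t\|\Delta v\|_2$ is \emph{not} controlled by $\|\langle r-t\rangle\Delta v\|_2$, since near the light cone $\langle r-t\rangle\sim 1\ll t$. You must localize before invoking Plancherel: apply the elliptic estimate to $\psi(x/t)v$ with a cutoff $\psi$ supported in (say) $|x|\le 2t/3$, where $\langle r-t\rangle\gtrsim t$ holds on the whole support, and handle the commutator terms $t^{-1}\nabla v$, $t^{-2}v$ separately. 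This is exactly the cutoff device the paper uses elsewhere (cf.\ the proofs of \eqref{2.30AA} and \eqref{2.99aB2}). The gap is repairable, but the step as stated is incorrect.

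Second, and more seriously, for \eqref{2.99aB2} the phrase ``nonlinear remainder absorbed by smallness'' hides the main difficulty. Writing $X:=\|(\partial\Gamma^{\le l_1+1}u)(t,\cdot)\|_2$, after applying \eqref{2.9a0} to $\tilde u=\partial\Gamma^{\le l_1-1}u$ the remainder is $\|\partial\square\Gamma^{\le l_1-1}u\|_{L^2(|x|\le 2t/3)}$, which by Leibniz contains the term $\|\,|\partial u|\,|\partial^3\Gamma^{\le l_1-1}u|\,\|_{L^2}$. The bounds available at this stage give only $\|\partial u\|_{L^\infty}\lesssim t^{-1/2}X$ (Lemma \ref{lem2.5A}) and $\|\partial^3\Gamma^{\le l_1-1}u\|_{L^2(|x|\le 2t/3)}\lesssim t^{-1}X$ (from \eqref{2.9a4}), i.e.\ a contribution of size $t^{-3/2}X^2$, which falls short of the required $t^{-2}$. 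Smallness alone does not produce the missing half power of time decay. The fix, which is the actual content of the paper's proof, is structural: split $\partial^3$ as in \eqref{A2.21_1} into a $\partial\square$ piece (absorbed by smallness to get \eqref{A2.21_2}) and a $\partial\tilde\partial^2$ piece, which on the interior region is controlled through localized elliptic regularity by $\Delta\partial\Gamma^{\le l_1-1}u$ --- the very quantity being estimated; one then either absorbs it directly or runs the paper's two-pass bootstrap (first conclude the $t^{-3/2}$ bound, feed the improved interior bound for $\tilde\partial^2\partial\Gamma^{\le l_1-1}u$ back in, and upgrade to $t^{-2}$). This conversion of $\partial^3$ into $\Delta\partial$ plus the second pass is missing from your argument.
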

 \begin{rem}
 It also holds that
  \begin{align} \notag
 \|  \partial^3 \Gamma^{\le l_1-1} u \|_{L_x^2(|x|\le \frac 23 t)} 
 \lesssim t^{-2}\| (\partial \Gamma^{\le l_1 +1} u)(t,\cdot) \|_{L_x^2(\mathbb R^2)}.
 \end{align}
\end{rem}
\begin{proof}
All estimates except \eqref{2.99aB2} were proved in  Lemma 2.4 of \cite{CLX}. We now
sketch the proof of \eqref{2.99aB2}.  Applying \eqref{2.9a0} to $\tilde u
= \partial \Gamma^{\le l_1-1} u$ with $r\le \frac 23 t$, we get
\begin{align}
| \Delta \partial \Gamma^{\le l_1-1} u |
\lesssim \frac 1 t |\partial^2 \Gamma^{\le l_1 } u|
+ | \partial \square \Gamma^{\le l_1-1} u|.
\end{align}
By Lemma \ref{Lem2.3}, we have
\begin{align}
| \partial \square \Gamma^{\le l_1-1} u|
& \lesssim \sum_{a+b\le l_1-1} | \partial( \partial \Gamma^a u \partial^2 \Gamma^b u) |
\notag \\
& \lesssim 
|\partial^2 \Gamma^{\le l_1-1} u| | \partial^2 \Gamma^{\le l_1-1} u| +|\partial \Gamma^{\le l_1-1}u|
|\partial^3 \Gamma^{\le l_1-1} u|. \label{A2.21_0}
\end{align}
Note that 
\begin{align}
|\partial^3 \Gamma^{\le l_1-1} u|
&\lesssim | \underbrace{\partial_{tt} \partial \Gamma^{\le l_1-1} u }_{\text{$\partial_t$ appears
twice or more}} |+ \sum_{1\le i_1,i_2\le 2} |\underbrace{
\partial \partial_{i_1} \partial_{i_2} \Gamma^{\le l_1-1} u}_{\text{$\partial_t$
appears at most once}}|  \notag \\
& \lesssim | \partial \square \Gamma^{\le l_1-1} u|
+ |\partial \tilde \partial^2 \Gamma^{\le l_1-1} u |, \label{A2.21_1}
\end{align}
where we have denoted $\tilde \partial = (\partial_1, \partial_2)$.  By using the smallness
of the pre-factor $\|\partial \Gamma^{\le l_1-1} u\|_{\infty}$ and
\eqref{A2.21_1},  we then derive from \eqref{A2.21_0}
\begin{align}
|\partial \square \Gamma^{\le l_1-1} u |\lesssim 
|\partial^2 \Gamma^{\le l_1-1} u| | \partial^2 \Gamma^{\le l_1-1} u| +|\partial \Gamma^{\le l_1-1}u|
|\partial \tilde \partial^2 \Gamma^{\le l_1-1} u|. \label{A2.21_2}
\end{align}

Clearly by Sobolev embedding $H^1(\mathbb R^2) \hookrightarrow L^4(\mathbb R^2)$, 
we get (below denote $X=\| (\partial \Gamma^{\le l_1 +1} u)(t,\cdot) \|_2$)
\begin{align}
&\| \langle r-t\rangle \partial^2 \Gamma^{\le l_1-1} u
\|_4  \lesssim \| \partial^{\le 1} ( \langle r -t \rangle  \partial^2 \Gamma^{\le l_1-1} u)
\|_2 \lesssim X;  \\
& \| \langle r -t \rangle^{\frac 12}  \partial \Gamma^{\le l_1-1} u\|_{\infty}
 \lesssim X,
\qquad (\text{by Lemma \ref{lem2.5A}}). 
\end{align}
By using a smooth cut-off function localized to $|x| \le \frac 23 t$, we then derive
\begin{align}
\| \Delta \partial \Gamma^{\le l_1-1} u\|_{L_x^2(|x|\le \frac 23 t)} \lesssim t^{-\frac 32} X.
\end{align}
It follows that (recall $\tilde \partial=(\partial_1, \partial_2)$)
\begin{align}
\| \tilde \partial^2 \partial \Gamma^{\le l_1-1} u\|_{L_x^2(|x|\le \frac 23 t)} \lesssim t^{-\frac 32}X.
\end{align}
Plugging this estimate into \eqref{A2.21_2},  we then 
obtain the estimate \eqref{2.99aB2}.
\end{proof}

\begin{lem} \label{lem2.5A}
For any $f \in C_c^{\infty}(\mathbb R^2)$, we have
\begin{align}
&\langle |x_0| -t \rangle^{\frac 12} |f(x_0)| \lesssim \|f\|_2+\| \langle |x|-t\rangle \nabla f
\|_2 + \| \langle |x|-t\rangle \partial_1 \partial_2 f\|_2, \quad \forall\, x_0\in \mathbb R^2,
\, t\ge 0;  \label{2.26A}\\
& \| \langle |x| -t \rangle \partial f \|_{\infty}
\lesssim \| \langle |x|-t\rangle \partial f \|_2 + \| \langle |x| -t\rangle \partial^2 f \|_2
+ \| \langle |x| -t \rangle \partial^3 f \|_2, \, \quad\forall\, t\ge 0.
\label{2.26B}
\end{align}
It follows that
\begin{align}
&\| f \|_{L_x^{\infty}(\mathbb R^2)} \lesssim
\langle t \rangle^{-\frac 12}
(\|f\|_2 + \| \langle |x| -t\rangle \nabla \tilde \Gamma^{\le 1 } f \|_2), \qquad\forall\, t\ge 0,
\label{2.27A}
\end{align}
where $\tilde \Gamma =(\partial_1, \partial_2, \partial_{\theta})$.
\end{lem}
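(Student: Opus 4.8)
The plan is to reduce both pointwise bounds to the sharp two--dimensional Gagliardo--Nirenberg inequality
\[
\|g\|_{L^\infty(\mathbb R^2)}^2\le 2\|\partial_1 g\|_2\,\|\partial_2 g\|_2+2\|g\|_2\,\|\partial_1\partial_2 g\|_2,\qquad g\in C_c^\infty(\mathbb R^2),
\]
which is immediate from $g(x_0)^2=\iint_{y\le x_0}\partial_1\partial_2(g^2)(y)\,dy$, the identity $\partial_1\partial_2(g^2)=2\partial_1 g\,\partial_2 g+2g\,\partial_1\partial_2 g$, and Cauchy--Schwarz. The one extra ingredient needed to bring in the weight $\langle |x|-t\rangle$ is to apply this \emph{after localizing to a ball whose radius equals the scale of the weight at the observation point}, on which $\langle |x|-t\rangle$ is essentially constant.

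For \eqref{2.26A}, fix $x_0\in\mathbb R^2$ and set $d:=\langle |x_0|-t\rangle\ge1$. On $B:=\{|x-x_0|\le d/4\}$ one has $\bigl||x|-|x_0|\bigr|\le d/4$, and a short case analysis---separating $\bigl||x_0|-t\bigr|\le d/2$ (then $d\lesssim1$ and there is nothing to gain) from $\bigl||x_0|-t\bigr|>d/2$---shows $\langle |x|-t\rangle\sim d$ throughout $B$. Choose $\psi\in C_c^\infty(B)$ with $\psi\equiv1$ near $x_0$ and $|\partial^\beta\psi|\lesssim d^{-|\beta|}$, and apply the displayed inequality to $g=\psi f$. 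Expanding by Leibniz, and on $\supp\psi\subset B$ bounding $\|\partial^j f\|_{L^2(B)}\lesssim d^{-1}\|\langle |x|-t\rangle\partial^j f\|_2$ for $j\ge1$ while keeping the bare factor $\|f\|_{L^2(B)}\le\|f\|_2$, one gets $\|g\|_2\le A$, $\|\nabla g\|_2\lesssim d^{-1}A$ and $\|\partial_1\partial_2 g\|_2\lesssim d^{-1}A$, where $A$ denotes the right-hand side of \eqref{2.26A}. Hence $|f(x_0)|^2=|g(x_0)|^2\lesssim d^{-2}A^2+A\cdot d^{-1}A\lesssim d^{-1}A^2$, which is \eqref{2.26A}.

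For \eqref{2.26B} I would run the identical localization with $f$ replaced by $\partial f$: every $L^2$ factor then carries at least one derivative, so all of them can be upgraded to weighted norms on $B$, yielding $\langle |x_0|-t\rangle\,|\partial f(x_0)|\lesssim \|\langle |x|-t\rangle\partial f\|_2+\|\langle |x|-t\rangle\partial^2 f\|_2+\|\langle |x|-t\rangle\partial^3 f\|_2$; taking $\sup_{x_0}$ gives \eqref{2.26B}. Finally \eqref{2.27A} follows by combining this with Lemma \ref{lem:S} according to the position of $x_0$: if $|x_0|\le t/2$ then $\langle |x_0|-t\rangle\gtrsim\langle t\rangle$, so \eqref{2.26A}---together with the fact that $\partial_1\partial_2 f$ is one component of $\nabla\tilde\Gamma^{\le1}f$ and $\langle\,\cdot\,\rangle\ge1$---gives $\langle t\rangle^{1/2}|f(x_0)|\lesssim\|f\|_2+\|\langle |x|-t\rangle\nabla\tilde\Gamma^{\le1}f\|_2$; if $|x_0|>t/2$ then $\langle x_0\rangle\gtrsim\langle t\rangle$ and the first estimate of Lemma \ref{lem:S} gives $|f(x_0)|\lesssim\langle t\rangle^{-1/2}\bigl(\|\partial_r^{\le1}\partial_\theta^{\le1}f\|_2+\|\Delta f\|_2\bigr)$, where $\partial_r=\omega\cdot\nabla$, $\partial_\theta\in\tilde\Gamma$, and $\Delta f$ and $\nabla\partial_\theta f$ are all dominated pointwise by $|\nabla\tilde\Gamma^{\le1}f|\le|\langle |x|-t\rangle\nabla\tilde\Gamma^{\le1}f|$. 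Taking $\sup_{x_0}$ over the two regimes concludes the proof.

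I expect the only genuinely delicate point to be the uniform comparability $\langle |x|-t\rangle\sim\langle |x_0|-t\rangle$ on the ball $B$ across all regimes---in particular the degenerate case where $x_0$ is close to the light cone, so $d\sim1$, no rescaling gain is available, and one merely recovers the plain two--dimensional Sobolev inequality, whereas deep inside or far outside the cone the weight gain $d^{-1/2}$ (respectively $d^{-1}$) is the whole content of the estimate---together with the routine but error-prone bookkeeping of which $L^2$ factors must stay unweighted (only $\|f\|_2$) and which may absorb a power of $d\sim\langle |x|-t\rangle$.
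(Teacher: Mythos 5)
The paper does not prove this lemma itself (it cites Lemma~2.5 of \cite{CLX}), so I can only judge your argument on its own terms. Your proofs of \eqref{2.26A} and \eqref{2.26B} are correct: the pointwise Agmon-type inequality $\|g\|_\infty^2\lesssim\|\partial_1g\|_2\|\partial_2g\|_2+\|g\|_2\|\partial_1\partial_2g\|_2$, applied to $g=\psi f$ with $\psi$ supported on a ball of radius $\tfrac14\langle|x_0|-t\rangle$, works; the comparability $\langle|x|-t\rangle\sim\langle|x_0|-t\rangle$ on that ball needs no case analysis, since $\bigl|\langle a\rangle-\langle b\rangle\bigr|\le|a-b|\le\tfrac14\langle|x_0|-t\rangle$, and the bookkeeping of which factors may absorb a power of $d$ is as you describe.

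The proof of \eqref{2.27A} has a genuine gap in the region $|x_0|>t/2$. The first estimate of Lemma~\ref{lem:S} requires $\|\partial_r^{\le1}\partial_\theta^{\le1}f\|_2$, which contains the term $\|\partial_\theta f\|_2$, and this term is \emph{not} dominated by $\|f\|_2+\|\langle|x|-t\rangle\nabla\tilde\Gamma^{\le1}f\|_2$: the right-hand side of \eqref{2.27A} contains $\nabla\partial_\theta f$ but not $\partial_\theta f$ itself, and your remark ``$\partial_\theta\in\tilde\Gamma$'' does not bridge this. Since $\partial_\theta=x_1\partial_2-x_2\partial_1$ carries a factor of $|x|$, which near the light cone is of size $t$ while the weight $\langle|x|-t\rangle$ is of size $1$, the naive routes all lose exactly the factor you are trying to gain: e.g.\ $\|\partial_\theta f\|_2^2\le\|f\|_2\|\partial_\theta^2f\|_2$ together with $|\partial_\theta^2 f|\le|x||\nabla\partial_\theta f|\lesssim(1+t)\langle|x|-t\rangle|\nabla\partial_\theta f|$ only gives $\|\partial_\theta f\|_2\lesssim\langle t\rangle^{1/2}(\|f\|_2+\|\langle|x|-t\rangle\nabla\partial_\theta f\|_2)$, which after insertion into Lemma~\ref{lem:S} returns $\|f\|_\infty\lesssim\mathrm{RHS}$ with no $\langle t\rangle^{-1/2}$ decay at all. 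To close this you cannot use Lemma~\ref{lem:S} as a black box in the exterior region; you must rerun the polar-coordinate Klainerman--Sobolev computation ($r|F(r,\theta)|^2=-\int_r^\infty\partial_\rho(\rho F^2)\,d\rho$ plus one-dimensional Sobolev in $\theta$) while distributing the weight, so that $\partial_\theta f$ only ever appears either as $\partial_\theta f/\langle|x|-t\rangle$ (controlled by $\|\partial_r\partial_\theta f\|_2$ via the Hardy inequality in Lemma~\ref{lem:S}) or paired with an additional radial derivative carrying the weight $\langle|x|-t\rangle$. The interior case $|x_0|\le t/2$ of your argument is fine.
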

\begin{proof}
See Lemma 2.5 of \cite{CLX}.
\end{proof}

\begin{lem}[Decay estimates] \label{lem2.6}
Suppose $T_0\ge 2$ and $u \in C^{\infty}([2,T_0]\times \mathbb R^2)$ solves  \eqref{eq:we2d} with support in $|x|\le t+1$, $2\le t\le T_0$. Suppose $J\ge 3$ and
\begin{align}
E_J=E_J(u(t,\cdot) ) = \| (\partial \Gamma^{\le J} u )(t,\cdot) \|_2^2 \le \tilde \epsilon,
\end{align}
where $\tilde \epsilon>0$ is sufficiently small. Then we have the following decay estimates:
\begin{align}
& t^{\frac 12} \| \partial \Gamma^{\le J-2} u \|_{L_x^{\infty}}
+t^{\frac 12}\| \langle |x|-t\rangle \partial^2 \Gamma^{\le J-3} u \|_{L_x^{\infty}(|x|>\frac{t}{10})}
+\| \langle |x|-t\rangle \partial^2 \Gamma^{\le J-3} u \|_{L_x^{\infty}}
\lesssim \;E_J^{\frac 12}; \label{2.30A}\\
& \| \partial^2\Gamma^{\le J-3} u \|_{L_x^{\infty}(|x|<\frac t2)}
\lesssim t^{-\frac 32} E_J^{\frac 12}; \qquad \label{2.30AA}\\
& 
\| \langle |x|-t\rangle \partial^2 \Gamma^{\le J-3} u \|_{L_x^{\infty}}
\lesssim t^{-\frac 12} E_J^{\frac 12} ;
\qquad \label{2.30AB}\\
&\| \frac{T \Gamma^{\le J-2} u} {\langle |x|-t \rangle} \|_{L_x^{\infty}}
+ \| T \partial \Gamma^{\le J-3} u \|_{L_x^{\infty}}
\lesssim t^{-\frac 32} E_J^{\frac 12};  \label{2.30B} \\
& \| \frac{ T \Gamma^{\le J-1} u } { \langle |x| -t \rangle }
\|_{L_x^2}  +\| T \partial \Gamma^{\le J-1} u \|_{L_x^2} \lesssim
t^{-1} E_J^{\frac 12}. \label{2.30C}  
\end{align}
More generally, for any integer $J_1\ge 1$, we have
\begin{align} \label{2.30D}
\| \frac{T\Gamma^{\le J_1} u } {\langle |x| -t \rangle }
\|_{L_x^2} \lesssim t^{-1} \| \partial \Gamma^{\le J_1+1} u \|_{L_x^2}.
\end{align}
\end{lem}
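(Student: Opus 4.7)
The plan is to combine two tools already at our disposal: the $\langle t\rangle^{-1/2}$-Sobolev embedding of Lemma \ref{lem2.5A}, in particular \eqref{2.27A}, which converts $L^2$-energy bounds into $L^\infty$ decay; and the $\langle r-t\rangle$-weighted Hessian estimates of Lemma \ref{lem2.3a}, which trade a factor of $\langle r-t\rangle$ on $\partial^2$ for one additional $\Gamma$ derivative on $\partial$. The key algebraic replacement for the absent Lorentz boost is the identity
\[
T_i v \;=\; \frac{\omega_i}{t}L_0 v \;+\; \omega_i\,\frac{t-r}{t}\,\partial_r v \;+\; \frac{1}{r}(\partial_i-\omega_i\partial_r)v,
\]
obtained from $\partial_t v = t^{-1}(L_0 v - r\partial_r v)$ and the polar decomposition of $\partial_i$; the last summand equals $\pm(\omega_1\partial_2-\omega_2\partial_1)v$ and hence has no singularity at the origin. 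This identity is the substitute for the Lorentz vector field in pulling time-decay out of the good derivative $T$.

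\textbf{Pointwise $\partial$ and $\partial^2$ bounds \eqref{2.30A}, \eqref{2.30AB}.} For the first piece of \eqref{2.30A} apply \eqref{2.27A} to $f=\partial\Gamma^{\le J-2}u$: the $L^2$ part is already $\le E_J^{1/2}$, while the weighted part is dominated by $\|\langle r-t\rangle\,\partial^2\Gamma^{\le J-1}u\|_2$, which \eqref{2.9a4} with $l_0=J-1$ bounds by $E_J^{1/2}$ (the smallness \eqref{2.9a3} is secured once $\tilde\epsilon$ is small, since $\lceil(J-1)/2\rceil+2\le J$). The second piece of \eqref{2.30A} follows from the first and \eqref{2.99a2} at $l_1=J-3$. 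Finally \eqref{2.30AB} and the third piece of \eqref{2.30A} follow by splitting $\mathbb R^2$ into $\{r\ge t/10\}$, where \eqref{2.99a2} gives $\langle r-t\rangle|\partial^2\Gamma^{\le J-3}u|\lesssim|\partial\Gamma^{\le J-2}u|\lesssim t^{-1/2}E_J^{1/2}$, and $\{r<t/10\}$, where $\langle r-t\rangle\sim t$ is combined with \eqref{2.30AA}.

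\textbf{Interior $t^{-3/2}$-decay \eqref{2.30AA}.} Fix $x_0$ with $|x_0|<t/2$. For $t\ge 2$ sufficiently large, the ball $B(x_0,\sqrt t)$ lies entirely in $\{|x|\le 2t/3\}$, and the local Sobolev embedding $H^2\hookrightarrow L^\infty$ in $\mathbb R^2$, rescaled via $y=(x-x_0)/\sqrt t$, yields
\[
\|f\|_{L^\infty(B(x_0,\sqrt t/2))}\;\lesssim\; t^{-1/2}\|f\|_{L^2(B(x_0,\sqrt t))} + t^{1/2}\|\nabla^2 f\|_{L^2(B(x_0,\sqrt t))}.
\]
Apply this with $f=\partial^2\Gamma^{\le J-3}u$. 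The $L^2$-piece is bounded by $\|\partial^2\Gamma^{\le J-3}u\|_{L^2(|x|\le 2t/3)}$, which by \eqref{2.9a4} combined with $\langle r-t\rangle\sim t$ in the interior is $\lesssim t^{-1}E_J^{1/2}$. The $\nabla^2 f$-piece is controlled by $\|\partial^3\Gamma^{\le J-2}u\|_{L^2(|x|\le 2t/3)}$ (since $\partial\in\Gamma$), and the remark following Lemma \ref{lem2.3a} applied with $l_1=J-1$ gives $\lesssim t^{-2}E_J^{1/2}$. Both contributions combine to $t^{-3/2}E_J^{1/2}$, uniformly in $x_0\in\{|x|<t/2\}$, which is \eqref{2.30AA}.

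\textbf{$T$-derivative estimates \eqref{2.30B}--\eqref{2.30D} and the main obstacle.} Apply the boxed identity to $v=\Gamma^{\le J-2}u$ for the first half of \eqref{2.30B}, and to $v=\partial\Gamma^{\le J-3}u$ for the second half; the commutator $[L_0,\partial]=-\partial$ contributes only harmless terms. Divide by $\langle r-t\rangle$ and split at $r=t/2$. On $\{r\ge t/2\}$: the $L_0$-piece yields $t^{-1}\langle r-t\rangle^{-1}|\Gamma^{\le J-1}u|\lesssim t^{-1}\langle x\rangle^{-1/2}E_J^{1/2}\lesssim t^{-3/2}E_J^{1/2}$ by the pointwise Hardy inequality in Lemma \ref{lem:S}; the $(t-r)/t$-piece reduces to $t^{-1}\|\partial\Gamma^{\le J-2}u\|_\infty$ (for the first half) and, using \eqref{2.99a2}, to $t^{-1}\|\partial\Gamma^{\le J-2}u\|_\infty$ (for the second half); the $r^{-1}\partial_\theta$-piece uses $r\sim t$ together with the same Hardy bound. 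On $\{r<t/2\}$ we use the crude $|Tv|\lesssim|\partial v|$, $\langle r-t\rangle\sim t$, and the already proved bounds on $\partial\Gamma^{\le J-2}u$ and $\partial^2\Gamma^{\le J-3}u$. The $L^2$ bounds \eqref{2.30C}, \eqref{2.30D} follow from the same decomposition with the pointwise Hardy replaced by the $L^2$-Hardy $\|\langle|x|-t\rangle^{-1}w\|_2\lesssim\|\partial_r w\|_2$ of Lemma \ref{lem:S}; the $\langle r-t\rangle\,\partial^2$-contribution in \eqref{2.30C} is absorbed by \eqref{2.9a4} at $l_0=J-1$, and in the $r^{-1}\partial_\theta$-piece one uses $r^{-1}|\partial_\theta w|\lesssim|\nabla w|$ on $\{r<t/2\}$ (no origin singularity) and $r^{-1}\lesssim t^{-1}$ on $\{r\ge t/2\}$. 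The hardest step is \eqref{2.30AA}: obtaining the full $t^{-3/2}$ interior decay rather than the merely $t^{-1}$ bound that the naive Sobolev argument would give requires the strengthened $L^2$-estimate \eqref{2.99aB2}, which itself uses the wave equation to eliminate $\partial_{tt}$ in favour of spatial derivatives and null-form nonlinearities whose prefactors are small by $E_J\le\tilde\epsilon$; without this interior gain the $T$-derivative bounds inside $\{r<t/2\}$ would fall short of $t^{-3/2}$ and the subsequent decay ladder would collapse.
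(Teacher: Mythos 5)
Your argument is correct and follows essentially the same route as the paper: Lemma \ref{lem2.5A} plus the weighted Hessian bounds of Lemma \ref{lem2.3a} for \eqref{2.30A}--\eqref{2.30AB}, an interior interpolation between the $t^{-1}$ bound on $\partial^2\Gamma^{\le J-3}u$ and the $t^{-2}$ bound on two further derivatives for \eqref{2.30AA} (the paper uses $\|\tilde v\|_\infty\lesssim\|\tilde v\|_2^{1/2}\|\Delta\tilde v\|_2^{1/2}$ with a cutoff at scale $t$ where you use a rescaled $H^2\hookrightarrow L^\infty$ on balls of radius $\sqrt t$ — the same two $L^2$ inputs), and the decomposition of $T_i$ through $L_0$, $(t-r)\partial_r$ and $r^{-1}\partial_\theta$ combined with the Hardy inequalities of Lemma \ref{lem:S} for \eqref{2.30B}--\eqref{2.30D} (the paper writes $\partial_t+\partial_r=(t+r)^{-1}(2L_0-(t-r)\partial_-)$, you write $\partial_t=t^{-1}(L_0-r\partial_r)$, which is equivalent). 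One typo: in your boxed identity the angular term should read $(\partial_i-\omega_i\partial_r)v=\mp\omega_{3-i}\,r^{-1}\partial_\theta v$ without the extra prefactor $1/r$; your subsequent estimates use the correct form, so nothing breaks.
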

\begin{proof}
We shall take $\tilde \epsilon$ sufficiently small so that Lemma
\ref{lem2.3a} can be applied.  The estimate \eqref{2.30A} follows from Lemma \ref{lem2.5A} and Lemma \ref{lem2.3a}.
To derive the estimate \eqref{2.30AA}, we choose $\psi \in C_c^{\infty}(\mathbb R^2)$
such that $\psi(z) \equiv 1$ for $|z|\le 0.5$ and $\psi(z) \equiv 0$ for $|z|\ge 0.52$. Applying
the interpolation inequality $\|\tilde v\|_{\infty} \lesssim \|\tilde v \|_2^{\frac 12}
 \|\Delta \tilde v \|_2^{\frac 12}$ with $\tilde v(x) =\psi(\frac x t) \partial^2 
 \Gamma^{\le J-3} u$, we obtain
 \begin{align}
 \| \psi(\frac x t) \partial^2 \Gamma^{\le J-3} u 
 \|_{\infty} \lesssim \| \psi(\frac x t) \partial^2 \Gamma^{\le J-3} u 
 \|_2^{\frac 12}
 \| \Delta ( \psi(\frac x t) \partial^2 \Gamma^{\le J-3} u  ) \|_2^{\frac 12}.
 \end{align}
 By Lemma \ref{lem2.3a}, it is not difficult to check that
\begin{align}
 \| \Delta ( \psi(\frac x t) \partial^2 \Gamma^{\le J-3} u  ) \|_2 \lesssim t^{-2} E_J^{\frac 12},
 \quad  \| \psi(\frac x t) \partial^2 \Gamma^{\le J-3} u   \|_2  \lesssim t^{-1}
 E_J^{\frac 12}.
 \end{align}
 The estimate \eqref{2.30AA} then follows. For the estimate \eqref{2.30AB} we only
 need to examine the regime $|x| \ge t/2$. But this follows from Lemma \ref{lem2.3a} and
 \ref{lem2.5A}.

For \eqref{2.30B}, we note that the case $|x|\le \frac t2$ follows from
\eqref{2.30AA}.
On the other hand, for $|x|>\frac t2$ we denote $\tilde u=\Gamma^{\le J-2} u$ and estimate
$\| \frac {T_1 \tilde u} {\langle |x|-t \rangle} \|_{L_x^{\infty}(|x|>\frac t2)}$ (the
estimate for $T_2$ is similar). Recall that
\begin{align}
T_1 \tilde u & = \omega_1 \partial_t \tilde u + \partial_1 \tilde u
= \omega_1 (\partial_t + \partial_r ) \tilde u- \frac{\omega_2} r \partial_{\theta} \tilde u\notag \\
& = \omega_1 \frac 1 {t+r} ( 2 L_0 \tilde u - (t-r) \partial_-\tilde u) -\frac{\omega_2} r \partial_{\theta}
\tilde u.
\end{align}
Clearly for $r=|x|\ge \frac t2$,
\begin{align}
\left| \frac{ T_1 \tilde u } { \langle r -t \rangle}
\right| &\lesssim \frac1 t \Bigl( \left| \frac{L_0 \tilde u} {\langle r -t\rangle } \right| + |\partial \tilde u|
\Bigr)
+ \left| \frac{\partial_{\theta} \tilde u} {r \langle r-t \rangle } \right| \notag \\
& \lesssim t^{-1} \cdot t^{-\frac 12} \| \partial \Gamma^{\le 2} \tilde u\|_2
+ t^{-\frac 32}+t^{-1} \cdot t^{-\frac 12} \| \partial \Gamma^{\le 1} \partial_{\theta} \tilde u\|_2 \lesssim
t^{-\frac 32} E_J^{\frac 12},
\end{align}
where in the second last step we used Lemma \ref{lem:S} (for the term $|\partial \tilde u|$
we use \eqref{2.30A}).  The estimates for \eqref{2.30C}--\eqref{2.30D} is similar. We omit the details.
\end{proof}

\section{Proof of Theorem \ref{thm:main} }
In this section we carry out the proof of Theorem \ref{thm:main}.
Write $v= \Gamma^{\alpha} u$. By Lemma \ref{Lem2.3} we have
\begin{align}
\square v  &= \sum_{\alpha_1+ \alpha_2 \le \alpha}
g^{kij}_{\alpha_1,\alpha_2} \partial_k \Gamma^{\alpha_1} u
\partial_{ij} \Gamma^{\alpha_2} u \label{3.0A}\\
&=
g^{kij}\partial_k v
\partial_{ij} u  +
g^{kij} \partial_k  u
\partial_{ij} v
+\sum_{\substack{\alpha_1<\alpha, \alpha_2<\alpha;\\ \alpha_1+ \alpha_2 \le \alpha}}
g^{kij}_{\alpha_1,\alpha_2} \partial_k \Gamma^{\alpha_1} u
\partial_{ij} \Gamma^{\alpha_2} u.
\end{align}

Choose $p(t,r) = q(r-t)$, where
\begin{align}
&q(s) = \int_0^s \langle \tau\rangle^{-1}  \bigl(\log ( 2+\tau^2) \bigr)^{-2} d\tau, \quad s\in \mathbb R; \\
&-\partial_t p = \partial_r p = q^{\prime}(r-t)
= \langle r -t \rangle^{-1} \bigl( \log (2+(r-t)^2)  \bigr)^{-2}.
\end{align}
Multiplying both sides of \eqref{3.0A} by $e^p \partial_t v$, we obtain
\begin{align*}
   \text{LHS}&=\int e^p \pa_{tt}v\pa_{t}v -\int e^p \Delta v\pa_{t}v
   =\int e^p \pa_{tt} v\pa_{t}v +\int e^p\na v\cdot\na\pa_{t}v+\int e^p\na v\cdot\na p\pa_{t}v \\
   &=\frac12\frac{d}{dt}\int e^p(\pa v)^2 -\frac12\int e^{p}|\pa v|^2 p_{t}+\int e^p\na v\cdot\na p\pa_{t}v \\
   &=\frac12\frac{d}{dt}\| e^\frac{p}{2}\pa v\|_{L^2}^2 +\frac 12
   \int e^p q^{\prime} \cdot \Bigl( |\partial_+ v|^2+ \frac {|\partial_{\theta} v|^2}{r^2}  \Bigr)
   =\frac12\frac{d}{dt}\| e^\frac{p}{2}\pa v\|_{L^2}^2 +\frac 12
   \int e^p q^{\prime} |T v|^2.
\end{align*}
We shall sum over $|\alpha|\le m_0$, where $ m_0=m-1$ or $m$ is a running parameter. 
To simplify the notation in the subsequent nonlinear estimates, we introduce the following
terminology.

\noindent
\textbf{Notation}.  For a quantity $X(t)$, we shall write $X(t)= \mathrm{OK}$  if $X(t)$ can be written as
\begin{align} \label{No3.5}
X(t)= \frac d {dt} X_1 (t)+X_2(t) +X_3(t),
\end{align}
where  (below $\alpha_0>0$ is some constant)
\begin{align}
|X_1(t)| \ll \| (\partial \Gamma^{\le m_0}  u)(t,\cdot) \|_{L_x^2(\mathbb R^2)}^2,
\quad |X_2(t)| \ll    \sum_{|\alpha| \le m_0} \int e^p q^{\prime} |(T \Gamma^{\alpha} u)(t,x) |^2 dx,
\quad |X_3(t)| \lesssim t^{-1-\alpha_0}.
\end{align}
In yet other words, the quantity $X$ will be controllable  if either it can be absorbed into
the energy, or can be controlled by the weighted $L^2$-norm of the good
unknowns  from the Alinhac weight,  or it is integrable in time.

We now proceed with the nonlinear estimates. We shall discuss several cases.

\subsection{The case $\alpha_1<\alpha$ and $\alpha_2<\alpha$}
Since $g^{kij}_{\alpha_1,\alpha_2}$ still
satisfies the null condition, by \eqref{2.10A} we have
\begin{align}
  &\sum_{\substack{\alpha_1<\alpha,  \alpha_2 <\alpha \\ \alpha_1+\alpha_2\le \alpha}}
g^{kij}_{\alpha_1,\alpha_2} \partial_k \Gamma^{\alpha_1} u
\partial_{ij} \Gamma^{\alpha_2} u    \notag \\
= &\sum_{\substack{\alpha_1<\alpha,  \alpha_2 <\alpha \\ \alpha_1+\alpha_2\le \alpha}}
g^{kij}_{\alpha_1,\alpha_2}
(T_k \Gamma^{\alpha_1} u \partial_{ij} \Gamma^{\alpha_2} u
-\omega_k \partial_t \Gamma^{\alpha_1 } u T_i \partial_j \Gamma^{\alpha_2} u
+ \omega_k \omega_i \partial_t \Gamma^{\alpha_1} u T_j
\partial_t \Gamma^{\alpha_2} u ).
\end{align}

\texttt{Estimate of $\| T_k \Gamma^{\alpha_1} u  \partial^2 \Gamma^{\alpha_2} u\|_2$}.
If $|\alpha_1|\le |\alpha_2|$,  then by Lemma \ref{lem2.6} we have
\begin{align}
 \| T_k \Gamma^{\alpha_1} u  \partial^2 \Gamma^{\alpha_2} u\|_2
 \lesssim
 \| \frac {T_k \Gamma^{\alpha_1} u } {\langle r-t\rangle}
 \|_{\infty} \cdot
 \| \langle r-t\rangle \partial^2 \Gamma^{\alpha_2}u \|_2
 \lesssim  t^{-\frac 32} E_{\Flor{\frac {m_0}2}+2}^{\frac12} E_{m_0}^{\frac 12}.
 \end{align}
 If $|\alpha_1|>|\alpha_2|$, then we have
 \begin{align}
 \| T_k \Gamma^{\alpha_1} u  \partial^2 \Gamma^{\alpha_2} u\|_2
 \lesssim
 \| \frac {T_k \Gamma^{\alpha_1} u } {\langle r-t\rangle}
 \|_{2} \cdot
 \| \langle r-t\rangle \partial^2 \Gamma^{\alpha_2}u \|_{\infty}
 \lesssim  t^{-\frac 32} E_{\Flor{\frac {m_0}2}+3}^{\frac12} E_{m_0}^{\frac 12}.
 \end{align}

\texttt{Estimate of $\| \partial \Gamma^{\alpha_1} u T \partial \Gamma^{\alpha_2} u\|_2$}.
If $|\alpha_1| \le |\alpha_2|$, we have
\begin{align}
\| \partial \Gamma^{\alpha_1} u T \partial \Gamma^{\alpha_2} u\|_2
\lesssim \| \partial \Gamma^{\alpha_1} u \|_{\infty}
\cdot \| T \partial \Gamma^{\alpha_2} u \|_2
\lesssim t^{-\frac 32} E_{\Flor{\frac {m_0}2}+3}^{\frac12} E_{m_0}^{\frac 12}.
\end{align}
If $|\alpha_1| > |\alpha_2|$, we have
\begin{align}
\| \partial \Gamma^{\alpha_1} u T \partial \Gamma^{\alpha_2} u\|_2
\lesssim \| \partial \Gamma^{\alpha_1} u \|_{2}
\cdot \| T \partial \Gamma^{\alpha_2} u \|_{\infty} \lesssim  
t^{-\frac 32} E_{\Flor{\frac {m_0}2}+3}^{\frac12} E_{m_0}^{\frac 12}.
\end{align}

Collecting the estimates, we have proved
\begin{align} \label{4.13U}
  &\| \sum_{\substack{\alpha_1<\alpha,  \alpha_2 <\alpha \\ \alpha_1+\alpha_2\le \alpha}}
g^{kij}_{\alpha_1,\alpha_2} \partial_k \Gamma^{\alpha_1} u
\partial_{ij} \Gamma^{\alpha_2} u    \|_2 \lesssim  
t^{-\frac 32} E_{\Flor{\frac {m_0}2}+3}^{\frac12} E_{m_0}^{\frac 12}.
\end{align}

\subsection{The case $\al_{2}=\al$.}
Noting that $g^{kij}_{0,\alpha}=g^{kij}$, we have
\begin{align}
 \int g^{kij}\pa_{k}u\pa_{ij}v\pa_{t}v e^{p}
 &= \OK \underbrace{- \int g^{kij}\pa_{jk}u\pa_{i}v\pa_{t}v e^{p}}_{I_1}\underbrace{-\int g^{kij}\pa_{k}u\pa_{i}v\pa_{t}v \pa_{j}(e^{p})}_{I_2}-\int g^{kij}\pa_{k}u\pa_{i}v\pa_{tj}v e^{p}.
\end{align}
Here in the above, the term ``OK" is zero if $\partial_j =\partial_1$ or $\partial_2$.
This term is nonzero when $\partial_j = \partial_t$, i.e. we should absorb it into the energy when
integrating by parts in the time variable.

Further integration by parts gives
\begin{align}
  -\int g^{kij}\pa_{k}u\pa_{i}v\pa_{tj}v e^{p}
  &=\OK+ \underbrace{\int g^{kij}\pa_{tk}u\pa_{i}v\pa_{j}v e^{p}}_{I_3}+\underbrace{\int g^{kij}\pa_{k}u\pa_{i}v\pa_{j}v \pa_{t}(e^{p})}_{I_4}+\int g^{kij}\pa_{k}u\pa_{it}v\pa_{j}
  v e^{p}.
  \end{align}
\begin{align}
  \int g^{kij}\pa_{k}u\pa_{it}v\pa_{j}v e^{p}&=\OK \underbrace{-\int g^{kij}\pa_{ik}u\pa_{t}v\pa_{j}v e^{p}}_{I_5}\underbrace{-\int g^{kij}\pa_{k}u\pa_{t}v\pa_{j}v \pa_{i}(e^{p})}_{I_6}-\int g^{kij}\pa_{k}u\pa_{t}v\pa_{ij}v e^{p}.
\end{align}
It follows that
$$2\int g^{kij}\pa_{k}u\pa_{ij}v\pa_{t}v e^{p}=(I_1+I_3+I_5)+(I_2+I_4+I_6) +\OK.$$

Observe that if $\varphi=\pa_{k}u$ or $\varphi=e^{p}$, then
\begin{align}\label{varphi1}
   &-\pa_{j}\varphi\pa_{i}v\pa_{t}v+\pa_{t}\varphi\pa_{i}v\pa_{j}v-\pa_{i}\varphi\pa_{t}v\pa_{j}v
   \notag\\
  =&-T_{j}\varphi\pa_{i}v\pa_{t}v+\omega_{j}\pa_{t}\varphi\pa_{i}v \pa_{t} v+\pa_{t}\varphi\pa_{i}v\pa_{j}v-T_{i}\varphi\pa_{t}v\pa_{j}v+\omega_{i}\pa_{t}\varphi\pa_{t}v T_{j}v-\omega_{i}\omega_{j}\pa_{t}\varphi(\pa_{t}v)^2 \notag \\
  =&-T_{j}\varphi\pa_{i}v\pa_{t}v+\pa_{t}\varphi\pa_{i}v T_{j} v-T_{i}\varphi\pa_{t}v\pa_{j}v+\omega_{i}\pa_{t}\varphi\pa_{t}v T_{j}v-\omega_{i}\omega_{j}\pa_{t}\varphi(\pa_{t}v)^2 \notag\\
  =&-T_{j}\varphi\pa_{i}v\pa_{t}v+\pa_{t}\varphi T_{i}v T_{j} v-T_{i}\varphi\pa_{t}v\pa_{j}v-\omega_{i}\omega_{j}\pa_{t}\varphi(\pa_{t}v)^2.
 \end{align}
By \eqref{varphi1} and rewriting $\partial_t \varphi = \partial_k \partial_t u
=T_k \partial_t u -\omega_k \partial_{tt} u$, we have
\begin{align}
I_1+I_3+I_5=&\;\int g^{kij}(
-T_{j}\partial_k u \pa_{i}v\pa_{t}v+\pa_{t}\partial_k u T_{i}v T_{j} v-T_{i}\partial_k u\pa_{t}v\pa_{j}v-\omega_{i}\omega_{j}T_k \partial_t u(\pa_{t}v)^2) e^pdx.
\end{align}
By Lemma \ref{lem2.6}, we have $\|T \partial u\|_{\infty} \lesssim t^{-\frac 32} E_3^{\frac 12}$
and $\| \langle r-t\rangle \partial^2 u \|_{\infty} \lesssim t^{-\frac 12} E_3^{\frac 12} $. Clearly then
\begin{align}
\int_{\text{$r<\frac t2$ or $r>2t$} } |\partial^2 u| |T v|^2 dx\lesssim t^{-\frac 32} 
E_3^{\frac 12} E_{m_0},
\quad \int_{r \sim t } |\partial^2 u | |Tv|^2 dx
\ll \int e^p q^{\prime} |Tv|^2 dx.
\end{align}
It follows that
\begin{align}
I_1+I_3+I_5 =\OK.
\end{align}
Plugging $\varphi=e^{p}$ in \eqref{varphi1} and noting that $T_j (e^p)=0$, we  have
\begin{align*}
    I_2+I_4+I_6
    &=\int g^{kij}\pa_k u\Big(-T_j(e^p)\pa_iv \pa_tv- T_i(e^p)\pa_tv\pa_jv-\omega_i\omega_j(\pa_tv)^2\pa_t(e^p)+\pa_t(e^p)T_iv T_jv \Big) \notag \\
  &=   \int g^{kij}\left(  -T_k u\cdot \omega_i\omega_j (\pa_tv)^2 \pa_t(e^p) +\partial_k u\pa_t(e^p)T_i v T_j v \right).
\end{align*}
By Lemma \ref{lem2.6} we have $||Tu| | \partial_t (e^p)|\lesssim t^{-\frac 32} E_3^{\frac 12}$.
Clearly
\begin{align}
\|\partial u \partial_t (e^p)\|_{L_x^{\infty}(r<\frac t2,\, \text{or } r>2t)} \lesssim t^{-\frac 32}E_3^{\frac 12},
\quad \int_{r\sim t} |\partial u \partial_t (e^p)| |Tv|^2 dx \ll \int e^p q^{\prime} |Tv|^2 dx.
\end{align}
Thus
\begin{align*}
  I_{2}+I_{4}+I_{6} = \OK.
\end{align*}
This concludes the case $\alpha_2=\alpha$. 

\subsection{The case $\al_{1}=\al$, $\al_{2}=0$}
By \eqref{2.10A}, we have
\begin{align*}
 \int g^{kij}\pa_{k}v\pa_{ij}u\pa_{t}v e^{p}
  =&\int g^{kij} (T_{k}v\pa_{ij}u-\omega_{k}\pa_{t}vT_{i}\pa_{j}u+\omega_{k}\omega_{i}\pa_{t}v T_{j}\pa_{t}u)\pa_{t}v e^{p}\\
  =&\int g^{kij} (T_{k}vT_{i}\pa_{j}u-\omega_{i}T_{k}vT_{j}\pa_{t}u+\omega_{i}\omega_{j}T_{k}v\pa_{tt}u-\omega_{k}\pa_{t}vT_{i}\pa_{j}u+\omega_{k}\omega_{i}\pa_{t}v T_{j}\pa_{t}u)\pa_{t}v e^{p}.
\end{align*}
By Lemma \ref{lem2.6}, all terms containing $T\partial u$ decay as $O(t^{-\frac 32} E_3^{\frac 12}
E_{m_0})$.
Thus
\begin{equation}\label{eq:al1-00}
  \int g^{kij} \pa_{k}v\pa_{ij}u\pa_{t}v e^{p}= \OK+
  \underbrace{\int g^{kij} \omega_{i}\omega_{j}T_{k}v\pa_{tt}u\pa_{t}v e^{p}}_{=:Y_1}.
\end{equation}
We now discuss two cases.

Case 1: $m_0=m-1$.  By Lemma \ref{lem2.6}, we have
\begin{align}
\| \frac {Tv} {\langle |x| -t \rangle } \|_{L_x^2}
\lesssim t^{-1} E_{m_0+1}^{\frac 12} = t^{-1} E_m^{\frac 12},
\qquad \| \langle |x|-t\rangle \partial_{tt} u \|_{L_x^{\infty}}
\lesssim t^{-\frac 12} E_3^{\frac 12}.
\end{align}
Thus
\begin{align}
|Y_1| \lesssim t^{-\frac 32} E_3^{\frac 12} E_m^{\frac 12} E_{m_0}^{\frac 12}.
\end{align}

Case 2: $m_0=m$. By using Cauchy-Schwartz, we have
\begin{align}
|Y_1| &\le \OK + \mathrm{const} \cdot \int \frac 1 {q^{\prime}}|\partial_{tt} u |^2
|\partial_t v|^2 dx \notag \\
& \le  \OK+ \frac 1 t E_3 E_m.
\end{align}

Collecting all the estimates and assuming the norm of the initial data is
sufficiently small, we then obtain for some small constants $\epsilon_3>0$, $\epsilon_4>0$,
\begin{align}
\sup_{t\ge 2} E_{m-1} (u(t,\cdot) ) \le \epsilon_3 \ll 1, \qquad
\sup_{t\ge 2} \frac {E_m(u(t,\cdot) )} {t^{\epsilon_4} } \le 1.
\end{align}
This concludes the proof of Theorem \ref{thm:main}.

\bibliographystyle{abbrv}

\end{document}